\renewcommand{\Im}{\mbox{Im}\,}
\newtheorem{theorem}{Theorem}[section]
\newtheorem{lemma}[theorem]{Lemma}
\newtheorem{proposition}[theorem]{Proposition}
\theoremstyle{definition}
\newtheorem{definition}[theorem]{Definition}
\newtheorem{definitions}[theorem]{Definitions}
\theoremstyle{remark}
\newtheorem{remark}[theorem]{Remark}
\newtheorem{example}[theorem]{Example}
\theoremstyle{Definition and Notation}
\begin{document}
\bibliographystyle{amsplain}

\title[When every Gorenstein projective (resp. flat) module is strongly...]{When every Gorenstein projective (resp. flat) module is
strongly Gorenstein projective (resp. flat).}

\author{Najib Mahdou}
\address{Najib Mahdou\\Department of Mathematics, Faculty of Science and Technology of Fez, Box 2202, University S.M. Ben Abdellah Fez, Morocco.}

\author{Mohammed Tamekkante}
\address{Mohammed Tamekkante\\Department of Mathematics, Faculty of Science and Technology of Fez, Box 2202, University S.M. Ben Abdellah Fez, Morocco.}

\keywords{(Strongly) Gorenstein projective and flat modules;
Gorenstein global and weak dimensions of rings. }

\subjclass[2000]{13D05, 13D02}

\begin{abstract}

In \cite{Ouarghi}, the authors discuss the rings over which all
modules are strongly Gorenstein projective. In this paper, we are
interesting to an extension of this idea. Thus, we discuss the
rings over which every Gorenstein projective (resp. flat) module
is strongly Gorenstein projective (resp, flat). Our aim is to give
examples of rings with different Gorenstein global dimension
satisfied this condition.
\end{abstract}

\maketitle

\section{Introduction}
Throughout this paper, all rings are commutative with identity
element, and all modules are unital.\\

$\mathbf{\quad Setup\; and\; Notation:}$ Let $R$ be a ring, and
let $M$ be an $R$-module. As usual we use $pd_R(M)$, $id_R(M)$ and
$fd_R(M)$ to denote, respectively, the classical projective,
injective and flat dimensions of $M$. By $gldim(R)$ and $wdim(R)$
we denote, respectively, the classical global and weak dimensions
of R.\\
It is by now a well-established fact that even if R to be
non-Noetherian, there exists Gorenstein projective, injective and
flat dimensions of M, which are usually denoted by $Gpd_R(M)$,
 $Gid_R(M)$, and $Gfd_R(M)$, respectively. Some references are
 \cite{Bennis and Mahdou1, Bennis and Mahdou2, Christensen, Christensen
 Frankild, Enochs, Enochs2, Eno Jenda Torrecillas, Holm}.\\

 Recently in \cite{Bennis and Mahdou2}, the authors started the study of
global Gorenstein dimensions of rings, which are called, for a
commutative ring $R$, Gorenstein global  projective, injective,
and weak dimensions of $R$, denoted by $GPD(R)$, $GID(R)$, and
$G.wdim(R)$, respectively, and, respectively, defined as
follows:\bigskip

$\begin{array}{cccc}
  1) & GPD(R) & = & sup\{ Gpd_R(M)\mid M$ $R-module\} \\
  2) & GID(R) & = & sup\{ Gid_R(M)\mid M$ $R-module\} \\
  3) & G.wdim(R) & = & sup\{ Gfd_R(M)\mid M$ $R-module\}
\end{array}$
\\

They proved that, for any ring R, $ G.wdim(R)\leq GID(R) = GPD(R)$
(\cite[Theorems 2.1 and 2.11]{Bennis and Mahdou2}). So, according
to the terminology of the classical theory of homological
dimensions of rings, the common value of $GPD(R)$ and $GID(R)$ is
called
Gorenstein global dimension of $R$, and denoted by $G.gldim(R)$.\\
They also proved that the Gorenstein global and weak dimensions
are refinement of the classical global  and weak dimensions of
rings. That is : $G.gldim(R) \leq gldim(R)$ and $G.wdim(R)\leq
wdim(R)$ with equality if $wdim(R)$ is finite (\cite[Propositions
2.12]{Bennis and Mahdou2}).\bigskip

In \cite{Bennis and Mahdou1}, the authors studied a particulars
cases of a Gorenstein projective, injective and flat modules which
they call a strongly Gorenstein projective, injective and flat
modules respectively, and defined as follows:
\begin{definitions}\
\begin{enumerate}
    \item A module $M$ is said to be strongly Gorenstein projective ($SG$-projective for short), if
there exists an exact sequence of   projective modules  of the
form:
$$\mathbf{P}=\ \cdots\rightarrow P\stackrel{f}\rightarrow
P\stackrel{f} \rightarrow P\stackrel{f}
         \rightarrow P \rightarrow\cdots$$  such that  $M \cong \Im(f)$
and such that $Hom(-,P)$ leaves the sequence $\mathbf{P}$ exact whenever $P$ is projective.\\
  The exact sequence $\mathbf{P}$ is
called a strongly complete projective resolution.
    \item The strongly Gorenstein injective modules
are defined dually.
    \item A module $M$ is said to be strongly
Gorenstein flat ($SG$-flat for short), if there exists an exact
sequence of flat module  of the form:
$$\mathbf{F}=\ \cdots\rightarrow F\stackrel{f}\rightarrow F \stackrel{f}\rightarrow
F \stackrel{f}\rightarrow F \rightarrow\cdots$$ such that  $M
\cong \Im(f)$ and such that $-\otimes I$ leaves $\mathbf{F}$ exact
whenever $I$ is injective.

The exact sequence $\mathbf{F}$ is called a strongly complete flat
resolution.
\end{enumerate}
\end{definitions}
The principal role of the strongly Gorenstein projective and
injective modules is to give a simple characterization of a
Gorenstein projective and injective modules, respectively, as
follows:
\begin{theorem}[\cite{Bennis and Mahdou1}, Theorem 2.7] A  module is Gorenstein projective (resp., injective)
if, and only if, it is a direct summand of a strongly Gorenstein
projective (resp., injective) module.

\end{theorem}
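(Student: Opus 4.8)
The plan is to establish the two implications separately, with the forward (``only if'') direction carrying all the real content. For the easy direction, suppose $M$ is a direct summand of an $SG$-projective module $N$. Directly from the definitions, every $SG$-projective module is Gorenstein projective, since a strongly complete projective resolution is in particular a complete projective resolution (one in which all of the terms and all of the maps happen to coincide), so $N$ is Gorenstein projective. It then suffices to invoke the fact that the class of Gorenstein projective modules is closed under direct summands, whence $M$ itself is Gorenstein projective.

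For the main direction, suppose $M$ is Gorenstein projective and fix a complete projective resolution
$$\mathbf{P}:\ \cdots \rightarrow P_1 \stackrel{d_1}{\rightarrow} P_0 \stackrel{d_0}{\rightarrow} P_{-1} \stackrel{d_{-1}}{\rightarrow} P_{-2} \rightarrow \cdots$$
with $M \cong \Im(d_0)$; write $M_k = \Im(d_k)$ for each $k \in \mathbb{Z}$, so that $M = M_0$. The idea is to \emph{fold} $\mathbf{P}$ into a complex with a single repeating differential by summing all of its integer shifts. Concretely, I would set $P = \bigoplus_{k \in \mathbb{Z}} P_k$ and define $f : P \to P$ to be the map whose restriction to the summand $P_k$ is $d_k$ followed by the inclusion of $P_{k-1}$ into $P$. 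Up to reindexing this is exactly the direct sum $\bigoplus_{n \in \mathbb{Z}} \mathbf{P}[n]$ of the shifts of $\mathbf{P}$, the key point being that collecting one copy of each shift places the \emph{same} module $P$ in every degree and makes the differential the \emph{same} map $f$ in every degree.

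Next I would verify that
$$\mathbf{Q}:\ \cdots \rightarrow P \stackrel{f}{\rightarrow} P \stackrel{f}{\rightarrow} P \rightarrow \cdots$$
is a strongly complete projective resolution. Indeed $P$ is projective as a direct sum of projectives; the complex $\mathbf{Q} \cong \bigoplus_{n} \mathbf{P}[n]$ is exact because a direct sum of exact sequences of modules is exact; and for any projective module $Q$ one has $\Hom(\mathbf{Q},Q) \cong \prod_{n} \Hom(\mathbf{P}[n],Q)$, which is exact because each factor is a shift of the exact complex $\Hom(\mathbf{P},Q)$ and an arbitrary product of exact sequences of modules is exact. Finally, tracking indices gives $\Im(f) = \bigoplus_{k \in \mathbb{Z}} M_k$, so the $SG$-projective module determined by $\mathbf{Q}$ contains $M = M_0$ as a direct summand, which is precisely the claim.

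The injective case I would dispatch by the dual construction: starting from a complete injective resolution, take the direct \emph{product} of its integer shifts, obtaining a complex whose terms are the single injective module $\prod_k I_k$ and whose differential is one repeating map, with the required exactness of the relevant $\Hom(E,-)$ complexes again coming from the fact that products of exact sequences are exact. I expect the only delicate points to be the bookkeeping with indices when identifying $\Im(f)$, and, in checking the $\Hom$-condition, the interchange $\Hom(\bigoplus_{n} \mathbf{P}[n],Q) \cong \prod_{n} \Hom(\mathbf{P}[n],Q)$ together with exactness of infinite products; the genuine conceptual content of the argument lies entirely in the folding trick.
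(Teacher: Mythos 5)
Your argument is correct and is essentially the proof given in the cited source (the present paper states this result by citation to \cite{Bennis and Mahdou1} without reproving it): the ``folding'' of a complete projective resolution into the direct sum $\bigoplus_{n\in\mathbb{Z}}\mathbf{P}[n]$ of all its shifts, with exactness of $\Hom(-,Q)$ checked via $\Hom\bigl(\bigoplus_n \mathbf{P}[n],Q\bigr)\cong\prod_n\Hom(\mathbf{P}[n],Q)$ and exactness of products, is exactly the construction used there, as is the dual product construction for the injective case. The converse direction likewise matches: one observes that strongly Gorenstein projective modules are Gorenstein projective and invokes closure of $\mathcal{GP}(R)$ under direct summands (\cite[Theorem 2.5]{Holm}).
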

Using \cite[Theorem 3.5]{Bennis and Mahdou1} together with
\cite[Theorem 3.7]{Holm}, we have the next result:
\begin{proposition} Let $R$ be a coherent ring. A module is Gorenstein flat
if, and only if, it is a direct summand of a strongly Gorenstein
flat module.
\end{proposition}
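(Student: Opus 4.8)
The plan is to prove the two implications separately, following closely the proof of the projective analogue \cite[Theorem 2.7]{Bennis and Mahdou1}; the coherence hypothesis should enter at exactly one point, namely to guarantee that the class of Gorenstein flat modules is closed under direct summands.

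For the ``only if'' direction, suppose $M$ is Gorenstein flat, so that by definition there is a complete flat resolution
$$\mathbf{F}=\ \cdots \to F_{1}\to F_{0}\to F_{-1}\to \cdots$$
of flat modules, exact, with $M\cong\Im(F_{0}\to F_{-1})$ and with $-\otimes I$ leaving $\mathbf{F}$ exact for every injective module $I$. I would ``fold'' $\mathbf{F}$ into a strongly complete flat resolution by forming the direct sum $\mathbf{G}=\bigoplus_{i\in\mathbb{Z}}\mathbf{F}[i]$ of all its integer shifts. In each degree $\mathbf{G}$ is then the single flat module $G=\bigoplus_{n\in\mathbb{Z}}F_{n}$ (a direct sum of flat modules is flat), and all of its differentials coincide with one endomorphism $g=\bigoplus_{n}\partial_{n}$ of $G$. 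A direct sum of exact complexes is exact, so $\mathbf{G}$ is exact; and since $-\otimes I$ commutes with direct sums, $\mathbf{G}\otimes I=\bigoplus_{i}(\mathbf{F}\otimes I)[i]$ is again exact for every injective $I$. Thus $\mathbf{G}$ is a strongly complete flat resolution, $\Im(g)=\bigoplus_{n}\Im(\partial_{n})$ is strongly Gorenstein flat, and $M$ occurs as one of its direct summands. This is the construction I would draw from \cite[Theorem 3.5]{Bennis and Mahdou1}, and it requires no hypothesis on $R$.

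For the ``if'' direction, suppose $M$ is a direct summand of a strongly Gorenstein flat module $N$. First, $N$ is itself Gorenstein flat, since its strongly complete flat resolution is, in particular, a complete flat resolution of the kind appearing in the definition of Gorenstein flatness. It then remains to pass from $N$ to its summand $M$, and this is exactly where coherence is used: by \cite[Theorem 3.7]{Holm}, over a coherent ring the class of Gorenstein flat modules is closed under direct summands, whence $M$ is Gorenstein flat.

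The one genuinely delicate point is this closure under direct summands. For Gorenstein projective modules it holds over an arbitrary ring, which is why Theorem 2.7 of \cite{Bennis and Mahdou1} needs no restriction, whereas for Gorenstein flat modules it is available only under coherence; this is the sole place where the coherence assumption is unavoidable and explains why the statement is confined to coherent rings. By contrast, the verifications in the ``only if'' direction — that the direct sum of shifted exact complexes remains exact after tensoring with an injective, and that $M$ is a summand of $\Im(g)$ — are routine and present no real obstacle.
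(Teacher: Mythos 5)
Your proposal is correct and is essentially the paper's own argument: the paper proves this proposition simply by citing \cite[Theorem 3.5]{Bennis and Mahdou1} for the ``only if'' direction (whose underlying proof is exactly your folding construction $\mathbf{G}=\bigoplus_{i\in\mathbb{Z}}\mathbf{F}[i]$, valid over any ring) and \cite[Theorem 3.7]{Holm} for the ``if'' direction (closure of the Gorenstein flat class under direct summands, which is where coherence enters). You have merely unpacked the same two ingredients, including correctly locating the sole use of the coherence hypothesis.
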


$\mathbf{Notation.}$ By  $\mathcal{P}(R)$, $\mathcal{I}(R)$ and
$\mathcal{F}(R)$ we denote the classes of all projective,
injective and fat $R$-modules respectively and by
 $\mathcal{GP}(R)$,
$\mathcal{GI}(R)$ and $\mathcal{GF}(R)$ denote the classes of all
strongly Gorenstein projective, injective and fat $R$-modules
respectively. Furthermore, we let $\mathcal{SGP}(R)$,
$\mathcal{SGI}(R)$ and  $\mathcal{SGF}(R)$
denote the classes of all  Gorenstein  projective, injective  and flat $R$-modules, respectively.\\

In  many place of this papers we use the notion of resolving
class. This notion was be introduced by Holm in \cite{Holm} as
follows:
\begin{definition}[Definition 1.1,\cite{Holm}] For any class $\mathcal{X}$ of $R$-modules.
\begin{description}
    \item[a/]We call $\mathcal{X}$ projectively resolving if $\mathcal{P}(R)\subseteq \mathcal{X}$, and for every short exact sequence
$0 \longrightarrow X' \longrightarrow X \longrightarrow X"
\longrightarrow 0$ with $X" \in \mathcal{X}$ the conditions $X'
\in \mathcal{X}$ and $X \in \mathcal{X}$ are equivalent.
    \item[b/]We call $\mathcal{X}$ injectively resolving if $\mathcal{I}(R)\subseteq \mathcal{X}$, and for every short exact sequence
$0 \longrightarrow X' \longrightarrow X \longrightarrow X"
\longrightarrow 0$ with $X' \in \mathcal{X}$ the conditions $X"
\in \mathcal{X}$ and $X \in \mathcal{X}$ are equivalent.
\end{description}
\end{definition}
In \cite{Holm} again, Holm prove that  the class $\mathcal{GP(R)}$
is projectively resolving and closed under arbitrary direct sums
and under direct summands (\cite[Theorem 2.5]{Holm}), and dually,
the class $\mathcal{GI(R)}$ is injectively  resolving and closed
under arbitrary direct products and under direct summands
(\cite[Theorem 2.6]{Holm}). He also prove that, if $R$ is
coherent, then the class $\mathcal{GF(R)}$ is projectively
resolving and closed under direct summands (\cite[Theorem
3.7]{Holm}).\\

For any ring $R$, it is clear that we have the following
inclusions of classes:

$$\mathcal{P}(R)\subseteq \mathcal{SGP}(R)\subseteq \mathcal{GP}(R)$$

We will see after that the inverse inclusions are not true in
general. Indeed, using \cite[Theorem 2.7]{Bennis and Mahdou1}, the
inclusion $\mathcal{SGP}(R)\subseteq \mathcal{P}(R)$ means that
$G.gldim(R)=gldim(R)$ and so we are in the classical case. \\
In this paper, we are interesting to discuss the equality of
classes $\mathcal{SGP}(R)= \mathcal{GP}(R)$ i.e, when every
Gorenstein projective module is strongly Gorenstein projective. A
trivial example is when the global dimension of $R$ is finite. In
the second part of the second section, we are also interesting to
the equality $\mathcal{SGF}(R)= \mathcal{GF}(R)$ over a coherent
ring $R$. In many place of this papers, we use the following
Lemma.
\begin{lemma}\label{lemma1}
Consider the following diagram of modules aver a ring $R$.
$$(\star)\begin{array}{ccccccc}
  0\rightarrow & M & \stackrel{\alpha}\rightarrow & P & \stackrel{\beta}\rightarrow & M & \rightarrow 0 \\
   & _{u}\downarrow &  &  &  & _{u}\downarrow &  \\
  0\rightarrow & Q & \stackrel{\iota}\rightarrow & Q\oplus Q & \stackrel{j}\rightarrow & Q & \rightarrow 0 \\
\end{array}$$
where $M$ is a Gorenstein projective module, $P$ and $Q$ are
projective and $\iota$ and $j$ are the canonical injection and
projection respectively. Then, there is a morphism
$\gamma:P\rightarrow Q\oplus Q$ which complete $(\star)$ and make
it commutative.
\end{lemma}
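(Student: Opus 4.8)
The plan is to construct $\gamma$ from its two components into $Q\oplus Q$. Writing any map $P\to Q\oplus Q$ as a pair $(\gamma_1,\gamma_2)$ with $\gamma_1,\gamma_2:P\to Q$ (the compositions with the two projections), and using that the canonical maps are $\iota(q)=(q,0)$ and $j(q_1,q_2)=q_2$ so that the bottom row is exact, the commutativity of the two squares in $(\star)$ unwinds into exactly three scalar equations: $\gamma_1\alpha=u$ together with $\gamma_2\alpha=0$ (left square), and $\gamma_2=u\beta$ (right square).

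First I would dispose of the right-hand square and the lower component of the left-hand square in one stroke by setting $\gamma_2:=u\beta$. Since the top row is exact we have $\beta\alpha=0$, hence $\gamma_2\alpha=u\beta\alpha=0$ automatically; so both $j\gamma=u\beta$ and $\gamma_2\alpha=0$ are satisfied with no further work.

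The only genuine requirement left is to produce $\gamma_1:P\to Q$ with $\gamma_1\alpha=u$, that is, to extend the map $u:M\to Q$ along the injection $\alpha:M\to P$. Applying $\Hom(-,Q)$ to the top row gives the exact sequence
$$\Hom(P,Q)\st{\alpha^{*}}{\longrightarrow}\Hom(M,Q)\longrightarrow \Ext^{1}(M,Q),$$
where $\alpha^{*}(g)=g\alpha$. Such a $\gamma_1$ exists precisely when $u$ lies in the image of $\alpha^{*}$, which is guaranteed as soon as $\Ext^{1}(M,Q)=0$.

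This last vanishing is the crux and essentially the only nontrivial point. Because $M$ is Gorenstein projective and $Q$ is projective, we have $\Ext^{1}_R(M,Q)=0$: this is the standard orthogonality of Gorenstein projective modules against projectives, which follows from the complete projective resolution defining $M$ (equivalently, from the characterization recalled above, realizing $M$ as a direct summand of a strongly Gorenstein projective module and using that $\Ext$ respects direct summands). Granting this, $\alpha^{*}$ is surjective, the required $\gamma_1$ exists, and $\gamma=(\gamma_1,u\beta)$ completes $(\star)$ commutatively. Everything apart from the $\Ext$-vanishing is routine diagram-checking, and I expect that vanishing to be the single step carrying all the weight.
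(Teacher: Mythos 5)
Your proposal is correct and follows essentially the same route as the paper: the paper also sets the second component to $u\circ\beta$ and obtains the first component $\upsilon$ with $\upsilon\circ\alpha=u$ by applying $\Hom(-,Q)$ to the top row and using $\Ext^1_R(M,Q)=0$ for $M$ Gorenstein projective and $Q$ projective (citing \cite[Proposition 2.3]{Holm}). Your componentwise bookkeeping, including the automatic identity $\gamma_2\alpha=u\beta\alpha=0$, matches the paper's definition $\gamma(p)=(\upsilon(p),u\circ\beta(p))$ exactly.
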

\begin{proof}
If we apply the functor $Hom(-,Q)$ to the short exact sequence
$$(\ast)\quad 0\rightarrow  M  \stackrel{\alpha}\rightarrow  P
\stackrel{\beta}\rightarrow  M  \rightarrow 0$$ we obtain the
short exact sequence:
$$(\ast\ast)\quad 0\rightarrow  Hom(M,Q)  \stackrel{\circ\beta}\rightarrow
Hom(P,Q) \stackrel{\circ\alpha}\rightarrow  Hom(M,Q)  \rightarrow
0$$ Since $Ext(M,Q)=0$ (\cite[Proposition 2.3]{Holm}). On the
other hand, $u\in Hom(M,Q)$. Then, from the exactness of
$(\ast\ast)$, there is a morphism $\upsilon: P\rightarrow Q$ such
that $\upsilon\circ \alpha=u$. Consequently, we can verified that
the morphism $\gamma:P\rightarrow Q\oplus Q$ defined by
$\gamma(p):=(\upsilon(p),u\circ\beta(p))$ whenever $p\in P$ is the
desired morphism.
\end{proof}
Dually, one can find easily the injective version of Lemma
\ref{lemma1}.\\ The aim of this paper is to construct a family of
rings $\{R_i\}_{i}$ over which every Gorenstein projective module
is strongly Gorenstein projective and such that $G.gldim(R_i)=i$
and $wdim(R_i)=\infty$.
\section{Main results}

Now, we give our main first result. \\

\begin{theorem}\label{theorem1}
Let $R$ be a ring. The following are equivalents:
\begin{enumerate}
    \item Every Gorenstein projective
module is strongly Gorenstein projective.
    \item For every module $M$ with $Gpd(M)\leq 1$, there
    exists a short exact sequence $$0\rightarrow M \rightarrow Q
    \rightarrow M \rightarrow 0$$ where $pd(Q)\leq 1$.
\end{enumerate}
\end{theorem}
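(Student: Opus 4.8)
The plan is to establish the two implications separately; (2)$\Rightarrow$(1) is short, while (1)$\Rightarrow$(2) carries the real content.

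For (2)$\Rightarrow$(1), I would start with a Gorenstein projective module $M$. Since $\gpd M=0\le 1$, (2) yields a short exact sequence $0\to M\to Q\to M\to 0$ with $\pd Q\le 1$. As the class of Gorenstein projective modules is projectively resolving (\cite[Theorem 2.5]{Holm}), having $M$ at both ends forces $Q$ to be Gorenstein projective. Writing a length-one projective resolution $0\to P_1\to P_0\to Q\to 0$, the vanishing $\Ext^1(Q,P_1)=0$ (\cite[Proposition 2.3]{Holm}, valid since $Q$ is Gorenstein projective) splits it, so $Q$ is in fact projective. Splicing $0\to M\to Q\to M\to 0$ with itself then gives a doubly infinite exact complex of projectives with image $M$, and the required $\Hom(-,\text{projective})$-exactness is precisely the surjectivity of $\Hom(Q,Q')\to\Hom(M,Q')$; this is $\Ext^1(M,Q')=0$, which I would obtain concretely from the lift produced by Lemma \ref{lemma1}. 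Hence $M$ is strongly Gorenstein projective.

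For (1)$\Rightarrow$(2), let $\gpd M\le 1$; the case $\gpd M=0$ is immediate from (1) (the strongly complete resolution of $M$ already has projective middle term), so the task is to manufacture the self-extension when $M$ is not Gorenstein projective. First I would take a projective presentation $0\to K\xrightarrow{\alpha}P_0\xrightarrow{\pi}M\to 0$; since the Gorenstein projective class is projectively resolving and closed under summands, $\gpd M\le 1$ makes the syzygy $K$ Gorenstein projective. Hypothesis (1) then presents $K$ as strongly Gorenstein projective, giving $0\to K\xrightarrow{i}F\xrightarrow{p}K\to 0$ with $F$ projective. Taking the pushout of $\alpha$ and $i$ produces a module $T$ sitting in $0\to P_0\to T\to K\to 0$ (so $T$ is Gorenstein projective, by closure under extensions) and in $0\to F\xrightarrow{\mu}T\xrightarrow{q}M\to 0$ with $F$ projective: this trades the presentation for a surjection onto $M$ from a Gorenstein projective module with \emph{projective} kernel. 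Applying (1) once more, $T$ is strongly Gorenstein projective, so $0\to T\xrightarrow{A}\Pi\xrightarrow{B}T\to 0$ with $\Pi$ projective.

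The decisive step is the pushout of $A\colon T\to\Pi$ along $q\colon T\to M$. It yields $Q$ with $0\to F\to\Pi\to Q\to 0$, whence $\pd Q\le 1$, together with $0\to M\xrightarrow{s}Q\xrightarrow{t}T\to 0$. The composite $qt\colon Q\to M$ is surjective, and its kernel is an extension of the projective $F$ by $M$, hence splits off $F$, leaving $0\to M\oplus F\to Q\to M\to 0$. This is where the main obstacle appears: the pushout unavoidably returns the Gorenstein projective module $T$ as cokernel, so replacing the cokernel by $M$ forces a spurious projective summand $F$ into the kernel, and one does not land on a genuine self-extension of $M$. I expect this to be the crux, and the way around it is to divide out that summand: the copy of $F$ lies inside the kernel $M\oplus F\subseteq Q$, so passing to $Q':=Q/F$ leaves $0\to M\to Q'\to M\to 0$, while $0\to F\to Q\to Q'\to 0$ with $F$ projective and $\pd Q\le 1$ forces $\pd Q'\le 1$. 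This produces exactly the sequence demanded by (2).
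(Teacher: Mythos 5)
Your proposal is correct, and while your direction $(2)\Rightarrow(1)$ essentially coincides with the paper's (the paper cites \cite[Proposition 2.27]{Holm} where you re-prove it inline by splitting a length-one resolution, and cites \cite[Proposition 2.9]{Bennis and Mahdou1} where you splice the self-extension explicitly; note only that the vanishing $\Ext^1(M,Q')=0$ comes straight from $M$ being Gorenstein projective via \cite[Proposition 2.3]{Holm} --- Lemma \ref{lemma1} is a consequence of that vanishing, not a source for it), your direction $(1)\Rightarrow(2)$ takes a genuinely different route. The paper applies hypothesis $(1)$ only once, to the syzygy $G$ in $0\rightarrow G\rightarrow Q\rightarrow M\rightarrow 0$: given the self-extension $0\rightarrow G\rightarrow P\rightarrow G\rightarrow 0$, Lemma \ref{lemma1} lifts the inclusion $u:G\rightarrow Q$, placed at both ends, to a map $\gamma:P\rightarrow Q\oplus Q$ into the split sequence $0\rightarrow Q\rightarrow Q\oplus Q\rightarrow Q\rightarrow 0$, and the Snake Lemma simultaneously yields $0\rightarrow M\rightarrow X\rightarrow M\rightarrow 0$ with $X=\coker\gamma$ and $0\rightarrow P\rightarrow Q\oplus Q\rightarrow X\rightarrow 0$, whence $\pd X\leq 1$ in one stroke. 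You instead invoke $(1)$ twice --- on the syzygy $K$ and again on the pushout $T$ --- and then repair the resulting sequence $0\rightarrow M\oplus F\rightarrow Q\rightarrow M\rightarrow 0$ by factoring out the projective complement $F$; every step checks out (the two pushout identifications, the splitting of $0\rightarrow M\rightarrow \ker(qt)\rightarrow F\rightarrow 0$ since $F$ is projective, the isomorphism $(M\oplus F)/F\cong M$, and the estimate $\pd(Q/F)\leq\max\{\pd Q,\ \pd F+1\}\leq 1$ from the long exact sequence of $\Ext$). What the paper's route buys is economy: a single use of the hypothesis and no spurious summand to strip away, which is precisely why Lemma \ref{lemma1} was isolated beforehand. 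What your route buys is independence from Lemma \ref{lemma1} and the Snake Lemma --- everything proceeds by pushouts and standard projective-dimension bounds --- at the cost of a second application of $(1)$ and the extra quotient step that you correctly identified as the crux of your construction.
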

\begin{proof}
Assume $(1)$ and we claim $(2)$. Let $M$ be a module such that
$Gpd(M)\leq 1$ and pick a short exact sequence $0\rightarrow G
\rightarrow Q \rightarrow M \rightarrow 0$ where $Q$ is projective
and $G$ is Gorenstein projective module (then strongly Gorenstein
projective by the hypothesis condition). Thus, from
\cite[Proposition 2.9]{Bennis and Mahdou1}, there exists a short
exact sequence $0\rightarrow G\rightarrow P\rightarrow G
\rightarrow 0$ where $P$ is projective. So, using Lemma
\ref{lemma1}, there is a morphism $\gamma: P\rightarrow Q\oplus Q$
such the following diagram is commutative:

$$(\star)\begin{array}{ccccccc}
  0\rightarrow & G & \rightarrow & P & \rightarrow & G & \rightarrow 0 \\
   & \downarrow &  & _{\gamma}\downarrow &  & \downarrow &  \\
  0\rightarrow & Q & \stackrel{\iota}\rightarrow & Q\oplus Q & \stackrel{j}\rightarrow & Q & \rightarrow 0 \\
\end{array}$$
 Hence, applying the Snake Lemma to this diagram and since $M\cong coker(G\rightarrow Q)$, we obtain a
 short  exact sequence of the form $0\rightarrow M \rightarrow X
 \rightarrow M \rightarrow 0$ where $X\cong coker (\gamma)$.
 Clearly, we have $pd(X)\leq 1$, as desired.\\
 Conversely, we assume $(2)$ and we claim $(1)$. Let $M$ be a
 Gorenstein projective module. By the hypothesis condition, there
 is an exact sequence $(\star) 0\rightarrow M \rightarrow Q \rightarrow M
 \rightarrow 0$ where $pd(Q)\leq 1$. But the class $\mathcal{GP}(R)$ is
 projectively resolving (by \cite[Theorem 2.5]{Holm}). Hence, from $(\star)$, $Q$ is also
 Gorenstein projective. Consequently, from \cite[Proposition
 2.27]{Holm}, $Q$ is projective. On the other hand, since $M$ is
 Gorenstein projective, for any projective module $P$ and any
 integer $i>0$, $Ext^i(M,P)=0$. Thus, from \cite[Proposition 2.9]{Bennis and
 Mahdou1}, $M$ is strongly Gorenstein projective, as desired.
 \end{proof}

In the next, we give example of rings over which
$\mathcal{SGP}(-)=\mathcal{GP}(-)$ or different.

\begin{example}\label{basic example} Let $D$ be a principal ideal domain and $\mathcal{P}$ a nonzero
prime ideal of $D$. Consider the rings $R=D/\mathcal{P}^2$
and $S=D/\mathcal{P}^3$. Then,
\begin{enumerate}
    \item $\mathcal{SGP}(R)=\mathcal{GP}(R)$, and
    \item $\mathcal{SGP}(S)\varsubsetneq \mathcal{GP}(S)$.
\end{enumerate}
\end{example}
\begin{proof}
\begin{enumerate}
    \item Follows immediately from \cite[Corollary 3.9]{Ouarghi}
    \item From \cite[Corollary 3.10]{Ouarghi}, $\mathcal{SGP}(S)\neq
    \mathcal{M}(S)=\mathcal{GP}(S)$ (where $\mathcal{M}(S)$ is the class of
    all $S$-modules). So, we have the desired strict inclusion.
\end{enumerate}
\end{proof}

From \cite[Corollary 3.9 and 3.10]{Ouarghi}, the rings $R$ and $S$
of Example \ref{basic example} have zero Gorenstein Global
dimension. In order to move up this $G.gldim$, we past by the
direct product of rings as shown by the next result.

\begin{theorem}\label{direct product of rings}
Let $\{R_i\}_{i=1,...,m}$ be a family of rings with finite
Gorenstein global dimensions. Then, $\mathcal{SGP}(\displaystyle
\prod_{i=1}^mR_i)=\mathcal{GP}(\displaystyle \prod_{i=1}^mR_i)$
if, and only if, $\mathcal{SGP}(R_i)=\mathcal{GP}(R_i)$ for each
$i=1,...,m$.
\end{theorem}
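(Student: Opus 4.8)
The plan is to reduce both implications to the characterization of the condition $\mathcal{SGP}=\mathcal{GP}$ supplied by Theorem \ref{theorem1}, and to exploit the fact that modules over a finite product split along the factors. Writing $1=e_1+\cdots+e_m$ for the orthogonal idempotents attached to $R=\prod_{i=1}^m R_i$, every $R$-module $M$ decomposes canonically as $M\cong\bigoplus_{i=1}^m M_i$ with $M_i:=e_iM$ viewed as an $R_i$-module, and conversely every family $(M_i)_i$ reassembles into an $R$-module. This gives an exact equivalence between $R$-modules and families of $R_i$-modules, and it is along this equivalence that I would transport the condition of Theorem \ref{theorem1}.

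First I would record the homological bookkeeping. For $M=(M_i)_i$ and $N=(N_i)_i$ the idempotents force $Hom_R(M,N)\cong\bigoplus_i Hom_{R_i}(M_i,N_i)$, whence $Ext^n_R(M,N)\cong\bigoplus_i Ext^n_{R_i}(M_i,N_i)$ for all $n$. Consequently $M$ is projective over $R$ (resp. satisfies $pd_R(M)\le 1$) if and only if each $M_i$ is projective over $R_i$ (resp. satisfies $pd_{R_i}(M_i)\le 1$). Moreover a complete projective resolution of $M$ splits into complete projective resolutions of the $M_i$, and a finite direct sum of such resolutions is again one; hence $M$ is Gorenstein projective over $R$ exactly when each $M_i$ is Gorenstein projective over $R_i$, and more generally $Gpd_R(M)=\sup_i Gpd_{R_i}(M_i)$. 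The hypothesis that each $G.gldim(R_i)$ is finite guarantees that all the Gorenstein dimensions occurring here are finite, so that $G.gldim(R)=\sup_i G.gldim(R_i)<\infty$ and every statement quoted above applies without caveat.

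With this dictionary, both directions are immediate from Theorem \ref{theorem1}. For the ``if'' direction, suppose $\mathcal{SGP}(R_i)=\mathcal{GP}(R_i)$ for every $i$, and let $M=(M_i)_i$ be an $R$-module with $Gpd_R(M)\le 1$; then each $Gpd_{R_i}(M_i)\le 1$, so Theorem \ref{theorem1} yields short exact sequences $0\rightarrow M_i\rightarrow Q_i\rightarrow M_i\rightarrow 0$ with $pd_{R_i}(Q_i)\le 1$. Their direct sum is a short exact sequence $0\rightarrow M\rightarrow Q\rightarrow M\rightarrow 0$ with $pd_R(Q)\le 1$, so $R$ satisfies condition $(2)$ of Theorem \ref{theorem1} and therefore $\mathcal{SGP}(R)=\mathcal{GP}(R)$. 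For the ``only if'' direction, fix an index $j$ and an $R_j$-module $N$ with $Gpd_{R_j}(N)\le 1$, and regard it as the $R$-module $M$ concentrated in the $j$-th slot; then $Gpd_R(M)=Gpd_{R_j}(N)\le 1$. Applying condition $(2)$ over $R$ produces $0\rightarrow M\rightarrow Q\rightarrow M\rightarrow 0$ with $pd_R(Q)\le 1$, and extracting the $j$-th component gives $0\rightarrow N\rightarrow Q_j\rightarrow N\rightarrow 0$ with $pd_{R_j}(Q_j)\le 1$. Thus $R_j$ satisfies condition $(2)$, and $\mathcal{SGP}(R_j)=\mathcal{GP}(R_j)$.

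The steps that merely need careful verification are that projectivity, the bound $pd\le 1$, and Gorenstein projectivity are genuinely componentwise and that the equivalence is exact, so that it both breaks up and reassembles short exact sequences; these rest on the idempotent computation of the second paragraph. The step I expect to be the real obstacle is the identity $Gpd_R(M)=\sup_i Gpd_{R_i}(M_i)$ --- equivalently, that a complete projective resolution over the product decomposes as a direct sum of complete projective resolutions over the factors and conversely reassembles from them --- since this is exactly what transports condition $(2)$ of Theorem \ref{theorem1} between $R$ and the $R_i$, and it is here that the finiteness of the Gorenstein global dimensions is put to use.
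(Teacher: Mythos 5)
Your proof is correct, but it takes a genuinely different route from the paper's. You reduce both implications to condition (2) of Theorem \ref{theorem1} and transport that condition across the idempotent decomposition $M\cong \bigoplus_{i} e_iM$, checking directly that exactness, projectivity, $pd\leq 1$, and Gorenstein projectivity (via splitting and reassembling complete projective resolutions, using $Hom_R(M,N)\cong \bigoplus_i Hom_{R_i}(e_iM,e_iN)$) are all componentwise over a finite product. The paper argues differently: it first reduces to $m=2$ by induction, then works with the modules themselves rather than with condition (2). For the forward implication it shows that $M\times 0$ is Gorenstein projective over $R_1\times R_2$ by combining \cite[Theorem 2.7 and Proposition 2.9]{Bennis and Mahdou1} with \cite[Theorem 2.20]{Holm}, where the finiteness of $G.gldim(R_1\times R_2)$ (from \cite[Theorem 3.1]{Bennis and Mahdou3}) is what supplies the needed Ext-vanishing; it then restricts scalars along $-\otimes_{R_1\times R_2}R_1$. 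For the converse it invokes \cite[Lemma 3.2]{Bennis and Mahdou3} and \cite[Lemma 3.3]{Bennis and Mahdou3}, verifying the hypotheses of the latter with \cite[Corollary 2.10]{Bennis and Mahdou2}, again via the finiteness assumption. The trade-off is clear: the paper's proof is short because it leans on the product machinery of \cite{Bennis and Mahdou3}, but it uses the finiteness hypothesis essentially; yours is self-contained, treats all $m$ at once without induction, and --- contrary to what you assert in your second and final paragraphs --- never actually needs the finiteness of the $G.gldim(R_i)$: Theorem \ref{theorem1} is stated for arbitrary rings, and your dictionary, including $Gpd_R(M)=\sup_i Gpd_{R_i}(e_iM)$, holds over any finite product, since complete projective resolutions split and assemble componentwise with no dimension hypothesis (alternatively, combine \cite[Theorem 2.7]{Bennis and Mahdou1} with the evident componentwise description of strongly Gorenstein projective modules coming from \cite[Proposition 2.9]{Bennis and Mahdou1}). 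So your argument in fact proves the stronger statement in which the finiteness hypothesis is dropped; the only blemish is the misattribution of where finiteness would be used, which is a presentational slip rather than a gap.
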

\begin{proof}
By induction on $m$, we may assume $m = 2$.\\
Assume that  $\mathcal{SGP}(R_1\times R_2)=\mathcal{GP}(R_1\times
R_2)$ and let $M$ be a $G$-projective $R_1$- module. We claim that
$M$ is a strongly Gorenstein projective $R_1$-module. Obviously,
$M\times 0$ is an $R_1\times R_2$-module (see \cite[Page
101]{Berrick}). First, we claim that  $M\times 0$ is a
$G$-projective  $R_1\times R_2$-module. The $R_1$-module $M$ is a
direct summand of an $SG$-projective $R_1$-module $N$
(\cite[Theorem 2.7]{Bennis and Mahdou1}). For such module, there
is a short exact sequence of $R_1$-modules $0\rightarrow
N\rightarrow P \rightarrow N \rightarrow 0$  where $P$ is a
projective $R_1$-module (\cite[Proposition 2.9]{Bennis and
Mahdou1}). Hence, we have a short sequence of $R_1\times
R_2$-modules $(\ast)\quad 0\rightarrow N\times 0\rightarrow
P\times 0 \rightarrow N\times 0 \rightarrow 0$ and $P\times 0$ is
a projective $R_1\times R_2$-module. But $G.gldim(R_1\times R_2)$
is finite (by \cite[Theorem 3.1]{Bennis and Mahdou3}). Then, there
is an integer $i>0$ such that $Ext_{R_1\times R_2}^i( N\times
0,Q)=0$ for each projective $R_1\times R_2$-module $Q$ (since
$Gpd_{R_1\times R_2}(N\times 0)<\infty$ and by \cite[Theorem
2.20]{Holm}). From $(\ast)$ we deduce that $Ext_{R_1\times R_2}(
N\times 0,Q)=0$. Then, by \cite[Proposition 2.9]{Bennis and
Mahdou1}, $N\times0$ is an $SG$-projective $R_1\times R_2$-module.
So, $M\times0$ is a $G$-projective $R_1\times R_2$-module (since
it is a direct summand of $N\times0$ as an $R_1\times R_2$-modules
and by \cite[Theorem 2.5]{Holm}). Now, we claim that $M$ is an
$SG$-projective $R_1$-module. By hypothesis, the $R_1\times
R_2$-module $M\times 0$ is $SG$-projective (since $M\in
\mathcal{GP}(R_1\times R_2)=\mathcal{SGP}(R_1\times R_2)$). Thus,
there exists
    a short exact sequence of $R_1\times R_2$-modules $(\star)\quad 0
    \rightarrow M\times 0\rightarrow P \rightarrow
    M \times 0 \rightarrow 0$ where $P$ is a  projective $R_1\times
    R_2$-module. Now, tensor $(\star)$ by $-\otimes R_1$ we obtain the short exact sequence of $R_1$-modules (see that $R_1$ is a projective
$R_1\times R_2$-module) $$(\star\star)\quad 0
    \rightarrow M\times 0 \displaystyle\otimes _{R_1\times R_2}R_1 \rightarrow P\displaystyle\otimes _{R_1\times R_2}R_1 \rightarrow
    M\times 0 \displaystyle\otimes _{R_1\times R_2}R_1 \rightarrow
    0$$ But  $M\times 0 \displaystyle\otimes _{R_1\times
    R_2}R_1\cong M\times0\displaystyle\otimes_{R_1\times
    R_2}(R_1\times R_2)/(0\times R_2) \cong M\times 0\displaystyle\cong_RM$ (isomorphism of $R$-modules). Then, we can
    write $(\star\star)$ as $ 0 \rightarrow M
    \rightarrow P\displaystyle\otimes _{R_1\times
    R_2}R_1 \rightarrow M \rightarrow 0$. It is clear that $P\displaystyle\otimes
    _{R_1\times
    R_2}R_1$ is a projective $R_1$-module. Furthermore, by \cite[Theorem 2.20]{Holm}, $Ext_{R_1}(M,F)=0$ for every $R_1$-module projective
      $F$ since $M$ is a $G$-projective $R_1$-module. So, by \cite[Proposition 2.9]{Bennis and Mahdou1}, $M$ is an
    $SG$-projective $R_1$-module, as desired.\\
   Similarly, we can prove that $\mathcal{SGP}(R_2)=\mathcal{GP}(R_2)$.\\

 Conversely, assume that $\mathcal{SGP}(R_i)=\mathcal{GP}(R_i)$
for $i=1,2$  and let
    $M$ be a $G$-projective $R_1\times R_2$-module. We claim that $M$ is an $SG$-projective $R_1\times R_2$-module. We have the isomorphism of $R_1\times R_2$-modules: $$M\cong M\otimes_{R_1\times
    R_2}R_1\times R_2\cong M\otimes_{R_1\times
    R_2}(R_1\times0\oplus 0\times R_2)\cong M_1\times M_2$$ where
    $M_i=M\otimes_{R_1\times
    R_2} R_i$ for $i=1,2$ (for more details see \cite[p.102]{Berrick}). By \cite[Lemma 3.2]{Bennis and Mahdou3}, for each $i=1,2$,  $M_i$ is
    a $G$-projective $R_i$-module. Then, by hypothesis, $M_i$ is an $SG$-projective $R_i$-module  for  $i=1,2$.
     On the other hand, the family $\{R_i\}_{i=1,2}$ of rings satisfies the conditions of
    \cite[Lemma 3.3]{Bennis and Mahdou3} (by \cite[Corollary 2.10]{Bennis and Mahdou2}  since $G.gldim(R_i)$ is finite  for each $i=1,2$). Thus,   $M=M_1\times M_2$ is
   an  $SG$-projective  $R_1\times R_2$-module, as desired.
   \end{proof}
   Now, we are able to construct a non-Noetherian family of rings
   $\{R_i\}$ over which every Gorenstein projective module is
   strongly Gorenstein projective and such that $G.gldim(R_i)=i$
   and $wdim(R_i)=\infty$.
\begin{example}\label{example2} Consider a non-semisimple quasi-Frobenius ring
$R=K[X]/(X^2)$ where $K$ is a field, and a non- Noetherian
hereditary ring $S$. Then, for every positive integer $n$, we
have:
\begin{enumerate}
    \item $\mathcal{SGP}(R \times S[X_1,X_2, ...,X_n])=\mathcal{GP}(R \times
S[X_1,X_2, ...,X_n])$,
    \item $G.gldim(R \times S[X_1,X_2, ...,X_n]) = n
+ 1$ and $wdim(R \times S[X_1,X_2, ...,X_n]) = \infty$.
\end{enumerate}
\end{example}
\begin{proof} From \cite[Example 3.4]{Bennis and Mahdou3}, only the
first assertion  need an argument. It is clear that
$\mathcal{P}(S[X_1,X_2,...,X_n])=\mathcal{SGP}(S[X_1,X_2,...,X_n])=\mathcal{GP}(S[X_1,X_2,...,X_n])$
since $wdim(S[X_1,X_2,...,X_n])$ is finite
 (by the Hilbert Syzygies's Theorem) and by using \cite[Proposition 2.27]{Holm}.  On the other hand, from Example \ref{basic example}, $\mathcal{SGP}(R)=\mathcal{GP}(R)$. Thus, Theorem \ref{direct product of
 rings} finish the proof.
\end{proof}
\begin{remark}
Similarly as in Example \ref{example2} and  by using the ring $S$
of Example \ref{basic example} and Theorem \ref{direct product of
rings}, we can construct a family of rings $\{S_i\}_{i\geq 0}$
(Noetherian or not) and with any Gorenstein global dimensions,
such that $\mathcal{SGP}(S_i)\varsubsetneq\mathcal{ GP}(R_i)$.
\end{remark}

 In the rest of this paper we will be interesting to discuss and give examples of rings satisfies the equality of classes $\mathcal{SGF}(-)=\mathcal{GF}(-)$. Before that, the next rest
 result give a particular case where the equality $\mathcal{SGP}(-)=\mathcal{GP}(-)$, $\mathcal{SGI}(-)=\mathcal{GI}(-)$ and $\mathcal{SGF}(-)=\mathcal{GF}(-)$ are equivalents.
 \begin{theorem}\label{class relation}
 Let $R$ be a commutative ring with Gorenstein global dimension $\leq 1$. We
 consider the following assertions:
 \begin{enumerate}
    \item Every Gorenstein projective module is strongly Gorenstein projective.
    \item Every Gorenstein injective module is strongly Gorenstein injective.
 \item Every Gorenstein flat module is strongly Gorenstein flat.
  \end{enumerate}
 Then  (1) and (2) are equivalent. If $R$ is coherent,
 all assertions are equivalent.
 \end{theorem}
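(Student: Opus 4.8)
The plan is to collapse each of the three assertions into a statement about the existence of a ``period-one'' self-extension, and then to compare these statements by exploiting the fact that $G.gldim(R)\le 1$ forces the finiteness of the various homological dimensions to coincide. First I would record the reductions. By Theorem \ref{theorem1}, together with $\GPD(R)=G.gldim(R)\le 1$ (so that \emph{every} module $M$ has $\gpd(M)\le 1$), assertion (1) is equivalent to: every $R$-module $M$ fits into a short exact sequence $0\to M\to Q\to M\to 0$ with $\pd(Q)\le 1$. Using the injective version of Lemma \ref{lemma1} and the dual of \cite[Proposition 2.9]{Bennis and Mahdou1} one obtains in the same way an injective analogue of Theorem \ref{theorem1}; combined with $\GID(R)=G.gldim(R)\le 1$, this shows that assertion (2) is equivalent to: every $M$ fits into $0\to M\to E\to M\to 0$ with $\id(E)\le 1$. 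When $R$ is coherent, the class $\mathcal{GF}(R)$ is projectively resolving by \cite[Theorem 3.7]{Holm} and a Gorenstein flat module of finite flat dimension is flat, so the flat analogues of Lemma \ref{lemma1}, of \cite[Proposition 2.9]{Bennis and Mahdou1} and of Theorem \ref{theorem1} all go through; with $G.wdim(R)\le G.gldim(R)\le 1$ this shows that assertion (3) is equivalent to: every $M$ fits into $0\to M\to F\to M\to 0$ with $\fd(F)\le 1$.

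Next I would note that in each reduced statement only the \emph{finiteness} of the dimension of the middle term is relevant. Indeed, if $\pd(Q)<\infty$ then $\pd(Q)=\gpd(Q)\le\GPD(R)\le 1$, so $\pd(Q)\le 1\Leftrightarrow\pd(Q)<\infty$; similarly $\id(E)\le 1\Leftrightarrow\id(E)<\infty$ and $\fd(F)\le 1\Leftrightarrow\fd(F)<\infty$. Hence the entire theorem reduces to the coincidence, over a ring with $G.gldim(R)\le 1$, of the classes of modules of finite homological dimension:
\[
\{X:\pd(X)<\infty\}=\{X:\id(X)<\infty\}
\]
and, in the coherent case, their further equality with $\{X:\fd(X)<\infty\}$. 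Once these coincidences are known the equivalences are immediate: the reduced forms of (1) and (2) then ask for a self-extension whose middle term lies in the \emph{same} class, so they literally coincide, and likewise for (3) in the coherent case.

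To prove the first coincidence I would use (citing or reproving from the Gorenstein-dimension machinery) the characterization $G.gldim(R)=\sup\{\pd(E):E\text{ injective}\}=\sup\{\id(P):P\text{ projective}\}$. This forces every injective module to have $\pd\le 1$ and every projective module to have $\id\le 1$; a finite projective resolution of a module of finite projective dimension is then a resolution by modules of finite injective dimension, so that module has finite injective dimension, and symmetrically. This gives the displayed equality and hence (1)$\Leftrightarrow$(2). For the coherent case I would invoke the flat counterpart $G.gldim(R)=\sup\{\id(F):F\text{ flat}\}$, i.e. that flat modules have finite injective and therefore (by the previous step) finite projective dimension; since $\pd(X)<\infty$ trivially gives $\fd(X)<\infty$, replacing the flat modules in a finite flat resolution yields $\{\fd<\infty\}=\{\pd<\infty\}$ and hence (1)$\Leftrightarrow$(3).

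The main obstacle is exactly these finiteness coincidences: that over a ring of finite Gorenstein global dimension, projective, injective and (coherently) flat modules each have finite dimension of the other kinds. This is where the hypothesis $G.gldim(R)\le 1$ does the real work, through the structural description of $G.gldim(R)$ in terms of the projective dimensions of injectives and the injective dimensions of projectives (and flats). Everything else is the formal diagram-chasing already packaged in Theorem \ref{theorem1} and Lemma \ref{lemma1} and their injective and flat analogues, so the whole argument hinges on establishing that one structural characterization cleanly.
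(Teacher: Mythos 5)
Your handling of (1) $\Leftrightarrow$ (2) and of (1) $\Rightarrow$ (3) is essentially sound and, in different packaging, is the paper's own route: Theorem \ref{theorem1} and its injective dual, the coincidence over a ring with $G.gldim(R)\le 1$ of the classes of modules of finite projective, injective and flat dimension (this is exactly what \cite[Corollary 2.10]{Bennis and Mahdou2} supplies), the resolving properties from \cite[Theorems 2.5, 2.6 and 3.7]{Holm}, and the fact that a Gorenstein projective (resp.\ injective, flat) module of finite projective (resp.\ injective, flat) dimension is projective (resp.\ injective, flat). The genuine gap is in (3) $\Rightarrow$ (1), at the point where you assert that ``the flat analogues of Lemma \ref{lemma1}, of \cite[Proposition 2.9]{Bennis and Mahdou1} and of Theorem \ref{theorem1} all go through.'' The forward direction of that flat analogue is precisely what does \emph{not} go through: the proof of Lemma \ref{lemma1} rests on $Ext^1_R(M,Q)=0$ for $M$ Gorenstein projective and $Q$ projective (\cite[Proposition 2.3]{Holm}), and the flat analogue would require $Ext^1_R(G,F)=0$ for $G$ Gorenstein flat and $F$ flat. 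But Gorenstein flatness carries a $Tor$-vanishing condition against injectives, not an $Ext$-vanishing condition against flats, so the lifting needed to splice $0\to G\to F'\to G\to 0$ into a self-extension of an arbitrary module $M$ with $Gfd_R(M)\le 1$ is unavailable. Hence your reduction of (3) to ``every module admits a self-extension with middle term of finite flat dimension'' is unproved, and with it your derivation of (1) from (3) collapses.

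There is a second, related omission: to exploit hypothesis (3) at all, one must be able to apply it to a Gorenstein projective module $M$, and for that one must first know that $M$ is Gorenstein flat --- which is not automatic over a general coherent ring. The paper's proof of $3\Rightarrow 1$ supplies exactly this missing input and thereby avoids any flat analogue of Lemma \ref{lemma1}: since $G.gldim(R)\le 1$, every injective module has flat dimension $\le 1$ (\cite[Corollary 2.10]{Bennis and Mahdou2}); hence $R$ is a $1$-$FC$ ring by \cite[Theorem 3.8]{Ding}; since a Gorenstein projective module embeds in a projective module, \cite[Theorem 7]{Chen} shows it is Gorenstein flat. Applying (3) \emph{directly to $M$ itself} (not to syzygies of arbitrary modules) then gives $0\to M\to F\to M\to 0$ with $F$ flat, and $F$ is upgraded to projective using that $\mathcal{GP}(R)$ is projectively resolving, that flat modules have finite projective dimension when $G.gldim(R)$ is finite, and \cite[Proposition 2.27]{Holm}. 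Without this Ding--Chen/Chen step, your $\{fd<\infty\}=\{pd<\infty\}$ bookkeeping has nothing to act on, so you should either import that argument or find a substitute for the failed flat lifting lemma.
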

 \begin{proof}$1 \Rightarrow 2$. Assume $(1)$ and let $M$ be a  $G$-injective
$R$-module. We claim that $M$ is $SG$-injective. By hypothesis,
$Gpd_R(M)\leq 1$. Then, there exists a short exact sequence of
$R$-modules $(\star) \quad 0\rightarrow M \rightarrow Q
\rightarrow M \rightarrow 0$ where $pd_R(Q)\leq 1$ (by Theorem
\ref{theorem1}). By \cite[Corollary 2.10]{Bennis and Mahdou2},
$id_R(Q)\leq 1$. On the other hand, $\mathcal{GI}(R)$ is
injectively resolving (\cite[Theorem 2.6]{Holm}). Then, $Q$ is a
$G$-injective $R$-module. Thus, by the injective version of
\cite[Proposition 2.27]{Holm}, we conclude that $Q$ is injective.
Moreover, for every injective $R$-module $E$ we have
$Ext_R(E,M)=0$ (since $M$ is $G$-injective). Consequently,  from
\cite[Remark 2.10]{Bennis and Mahdou1},   $M$ is  $SG$-injective, as desired.\\
$2\Rightarrow 1$. By the injective version of Theorem
\ref{theorem1}, the argument of this implication is similar to the
proof of the
first one.\\
Now, we suppose that $R$ is coherent and prove the equivalence $1\Leftrightarrow 3$.\\
$1\Rightarrow 3$. Let $M$ be a $G$-flat module.  By hypothesis $
Gpd_R(M)\leq 1$. Thus, from Theorem \ref{theorem1}, there is an
exact sequence $0\rightarrow M \rightarrow X \rightarrow M
\rightarrow 0$ where $pd(X)\leq 1$. Then,
$id(Hom_\mathbb{Z}(X,\mathbb{Q}/\mathbb{Z}))=fd(X)\leq pd(X)\leq
1$. Furthermore, from \cite[Theorem 3.7]{Holm}, $X$ is Gorenstein
flat module since $\mathcal{GF}(R)$ is projectively resolving.
Hence, by \cite[Proposition 3.11]{Holm},
$Hom_\mathbb{Z}(X,\mathbb{Q}/\mathbb{Z})$ is Gorenstein injective.
Consequently, by the dual of \cite[Proposition 2.27]{Holm},
$Hom_\mathbb{Z}(X,\mathbb{Q}/\mathbb{Z})$ is injective. Then, from
\cite[Theorem 1.2.1]{Glaz}), $X$ is flat. Hence,  $M$ is
immediately $SG$-flat (by \cite[Proposition 3.6]{Bennis and
Mahdou1} and since for any injective module $I$, we have
$Tor(M,I)=0$ since $M$ is
$G$-flat).\\
$3\Rightarrow 1$. Let $I$ be an injective $R$-module. From
\cite[Corollary 2.10]{Bennis and Mahdou2}, $fd_R(I)\leq1$. Then,
from \cite[Theorem 3.8]{Ding}, $R$ is an $1-FC$ ring (i.e.,
coherent ring with $Ext^2_R(P,R)=0$ for each finitely presented
$R$-module $P$). Now,  let $M$ be a $G$-projective $R$-module.
Then, $M$ embeds in projective $R$-module. So, from \cite[Theorem
7]{Chen}, $M$ is $G$-flat. Then, by hypothesis $M$ becomes
$SG$-flat. Hence, there exists a short exact sequence
$0\rightarrow M \longrightarrow F \rightarrow M \rightarrow 0$
where $F$ is flat. By the resolving of
 the class $\mathcal{GP}(R)$ and from the short exact sequence above we
deduce that  $F$ is $G$-projective (since $M$ is $G$-projective).
On the other hand, $pd_R(F)<\infty$ (by \cite[Corollary
2.10]{Bennis and Mahdou2} and since $F$ is flat). Therefore, $F$
is projective by \cite[Proposition 2.27]{Holm}. So $M$ is
$SG$-projective (by \cite[Proposition 2.9]{Bennis and Mahdou1} and
since $Ext(M,P)=0$ for every projective module $P$ because  $M$ is
$G$-projective).
\end{proof}

We have a similar result with the perfect rings as shown by the
next result. Recall that a ring $R$ is called perfect if every
flat $R$-module is projective \cite{Bass}.

\begin{proposition}\label{proposition}
   Let $R$ be a coherent ring with  finite Gorenstein global
   dimension. If every Gorenstein flat module is strongly Gorenstein flat, then every Gorenstein projective module is strongly Gorenstein
   projective with equivalence if  $R$ is perfect.
   \end{proposition}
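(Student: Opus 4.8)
The plan is to prove the two implications separately: first the main statement, which does not use perfectness, and then the reverse implication assuming $R$ perfect, which upgrades the first to an equivalence. Throughout I write $G.gldim(R)=n<\infty$ and use that $G.wdim(R)\leq G.gldim(R)$ by \cite{Bennis and Mahdou2}.

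For the main implication I would imitate the argument of $3\Rightarrow 1$ in Theorem \ref{class relation}, now allowing arbitrary finite Gorenstein global dimension. Let $M$ be Gorenstein projective. First I would record that $M$ is Gorenstein flat: $M$ sits in a complete projective resolution, hence embeds in a projective module, and since $R$ is coherent with $G.wdim(R)<\infty$ a dimension-shift shows $Tor_i(K,I)=0$ for every injective $I$, every $i>0$ and every (co)syzygy $K$ of that resolution. Indeed each such $K$ is again Gorenstein projective, so $Gfd_R(K)\leq G.wdim(R)<\infty$, and using the projective (hence flat) coresolution one gets $Tor_i(K,I)\cong Tor_{i+m}(K^{(m)},I)$, which vanishes once $i+m>n$ by \cite[Theorem 3.14]{Holm}; thus the complete projective resolution is also a complete flat resolution and $M$ is Gorenstein flat (alternatively one may invoke \cite[Theorem 7]{Chen} as in Theorem \ref{class relation}). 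By hypothesis $M$ is then strongly Gorenstein flat, so there is a short exact sequence $0\rightarrow M \rightarrow F \rightarrow M \rightarrow 0$ with $F$ flat. Since the class of Gorenstein projective modules is projectively resolving \cite[Theorem 2.5]{Holm} and both outer terms are the Gorenstein projective module $M$, the middle term $F$ is Gorenstein projective; being flat over a ring of finite Gorenstein global dimension it has finite projective dimension \cite[Corollary 2.10]{Bennis and Mahdou2}, whence $F$ is projective by \cite[Proposition 2.27]{Holm}. Finally $Ext^i_R(M,P)=0$ for all projective $P$ and $i>0$ because $M$ is Gorenstein projective, so \cite[Proposition 2.9]{Bennis and Mahdou1} applied to $0\rightarrow M \rightarrow F \rightarrow M \rightarrow 0$ shows $M$ is strongly Gorenstein projective.

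For the converse, under the additional hypothesis that $R$ is perfect, the heart of the matter is the claim that every Gorenstein flat module is Gorenstein projective. Let $M$ be Gorenstein flat and take a complete flat resolution of it; since $R$ is perfect every flat module is projective, so this is an exact complex of projective modules with $M$ among its images. To recognize it as a complete projective resolution I only need $Ext^i_R(K,Q)=0$ for $i>0$, every projective $Q$ and every (co)syzygy $K$, each of which is again Gorenstein flat. This I would obtain by the dual dimension-shift: from the coresolution $0\rightarrow K \rightarrow F^0 \rightarrow F^1 \rightarrow \cdots$ by projective modules (here perfectness is essential, since the $F^j$ must be projective and not merely flat) one gets $Ext^i_R(K,Q)\cong Ext^{i+m}_R(K^{(m)},Q)$, and for $m$ large the right-hand side vanishes because $Gpd_R(K^{(m)})\leq G.gldim(R)=n$ forces $Ext^{j}_R(K^{(m)},Q)=0$ for $j>n$ by \cite[Theorem 2.20]{Holm}. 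Hence $M$ is Gorenstein projective, so by hypothesis it is strongly Gorenstein projective, giving a short exact sequence $0\rightarrow M \rightarrow P \rightarrow M \rightarrow 0$ with $P$ projective, hence flat; together with $Tor_i(M,I)=0$ for injective $I$ (as $M$ is Gorenstein flat), \cite[Proposition 3.6]{Bennis and Mahdou1} shows $M$ is strongly Gorenstein flat.

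I expect the main obstacle to be the converse's key claim that Gorenstein flat implies Gorenstein projective over a perfect coherent ring, and specifically the passage from the tensor-exactness built into a complete flat resolution to the $Hom$-exactness against all projectives required for a complete projective resolution. The dimension-shift above settles this precisely because perfectness makes the ambient flat modules projective and the finiteness of $G.gldim(R)$ caps the relevant $Ext$ degrees; without perfectness the coresolution is only by flat modules and the shift collapses, which is exactly the reason the reverse implication needs this extra hypothesis.
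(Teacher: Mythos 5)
Your proof is correct, and it splits the same way the paper's does: forward implication without perfectness, then the converse via the chain ``Gorenstein flat $\Rightarrow$ Gorenstein projective $\Rightarrow$ strongly Gorenstein projective $\Rightarrow$ strongly Gorenstein flat.'' The forward implication is essentially the paper's argument (the paper cites \cite[Proposition 3.4 and Theorem 3.24]{Holm} for ``Gorenstein projective $\Rightarrow$ Gorenstein flat'' where you do a dimension shift by hand, but the rest --- the sequence $0\rightarrow M\rightarrow F\rightarrow M\rightarrow 0$ from $SG$-flatness, resolving of $\mathcal{GP}(R)$, finiteness of $pd_R(F)$ via \cite[Corollary 2.10]{Bennis and Mahdou2}, projectivity via \cite[Proposition 2.27]{Holm}, then \cite[Proposition 2.9]{Bennis and Mahdou1} --- is identical). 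Where you genuinely diverge is the pivotal claim of the converse. The paper never touches the complete flat resolution of $M$: it invokes the direct-summand characterization (\cite[Theorem 3.5]{Bennis and Mahdou1}, i.e.\ Proposition 1.3 of this paper) to embed $M$ as a summand of an $SG$-flat module $N$, takes the single short exact sequence $0\rightarrow N\rightarrow F\rightarrow N\rightarrow 0$ with $F$ flat (hence projective by perfectness), and uses the periodicity $Ext^i_R(N,P)\cong Ext^{i+1}_R(N,P)$ together with $id_R(P)\leq n$ from \cite[Corollary 2.10]{Bennis and Mahdou2} to conclude $N$ is $SG$-projective, whence $M$ is Gorenstein projective as a summand. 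You instead take an arbitrary complete flat resolution of $M$, use perfectness to see it is an exact complex of projectives, and verify $Hom(-,Q)$-exactness by dimension-shifting each cosyzygy against \cite[Theorem 2.20]{Holm}. Both are sound, and the underlying mechanism (shift $Ext$ up into the range where finiteness of $G.gldim(R)$ kills it) is the same; the paper's reduction to a strongly Gorenstein module lets the shift happen along one periodic short exact sequence rather than along a doubly infinite complex, which avoids the splicing argument you need to pass from vanishing of $Ext^1$ on cosyzygies to $Hom$-exactness of the whole complex, while your route has the merit of not relying on the direct-summand characterization (the one place where coherence enters that step) and of making explicit exactly where perfectness and the finiteness of $G.gldim(R)$ are used.
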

    \begin{proof} Assume that every Gorenstein flat module is strongly Gorenstein flat and let $M$ be a $G$-projective module. By \cite[Theorem 2.20]{Holm}, $Ext_R(M,Q)=$ for every
   projective module $Q$. So, to prove that $M$ is $SG$-projective it suffices to find a short exact sequence
   of $R$-modules $0  \rightarrow M \rightarrow P \rightarrow M \rightarrow 0$ where $P$ is projective
   (by \cite[Proposition 2.9]{Bennis and Mahdou1}). From
   \cite[Proposition 3.4 and Theorem 3.24]{Holm}, $M$ is also
   $G$-flat (since $G.gldim(R)$ is finite). Then, by the hypothesis condition, $M$ becomes  $SG$-flat. Thus, from \cite[Proposition 3.6]{Bennis and Mahdou1},
   there exists a short exact sequence $0\rightarrow N \rightarrow F \rightarrow M
   \rightarrow 0$ where $F$ is flat. So, from \cite[Corollary 2.10]{Bennis and
   Mahdou2}, $pd_R(F)$ is finite. On the other hand, by \cite[Theorem
   3.7]{Holm}, $\mathcal{GF}(R)$ is projectively resolving and then, from the
   short exact sequence above, $F$ is $G$-projective since $M$ is
   $G$-projective. Therefore, from \cite[Proposition 2.27]{Holm},
   $F$ is projective. Consequently, we have the desired short
   exact sequence.\\
   Now, assume that $R$ is a perfect ring with finite Gorenstein
   global dimension and such that every Gorenstein projective module is strongly Gorenstein projective. Let $M$ a $G$-flat module. We
    claim that it is strongly Gorenstein flat. By \cite[Theorem
   3.14]{Holm}, $Tor_R(M,I)=0$ for every injective module $I$. So,
   from \cite[Proposition 3.6]{Bennis and Mahdou1}, it stays to
   prove the existence of a short exact sequence $0
   \rightarrow M \rightarrow F \rightarrow M
   \rightarrow 0$ where $F$ is flat. By \cite[Theorem 3.5]{Bennis and
   Mahdou1} $M$ is a direct summand of an $SG$-flat module $N$.
   For such module, by \cite[Proposition 3.6]{Bennis and Mahdou1}, there exists a short exact sequence $(\ast) \quad 0
   \rightarrow N \rightarrow F \rightarrow N
   \rightarrow 0$ where $F$ is flat (then projective since $R$
   is perfect). Now, let $P$ be a  projective module.
   We have $id_R(P) \leq n$ where $n=G.gldim(R)$ (by \cite[Corollary 2.10]{Bennis and
   Mahdou2}). Then, $Ext_R^{n+1}(N,P)=0$ and so, from the short exact
   sequence $(\ast)$ we deduce that $Ext_R(N,P)=0$. So, from
   \cite[Proposition 2.9]{Bennis and Mahdou1}, $N$ is
   $SG$-projective. Consequently, $M$ is $G$-projective (since it is
   a direct summand of $N$ and by \cite[Theorem 2.7]{Bennis and
   Mahdou1}). Then, by hypothesis, $M$ becomes  $SG$-projective. Hence, by \cite[Proposition 2.9]{Bennis and Mahdou1}, there
   exists a short exact sequence $0\rightarrow M
   \rightarrow P  \rightarrow M \rightarrow 0$ where
   $P$ is projective (then  flat), and this is the desired short
   exact sequence.
   \end{proof}
   \begin{remark}\label{remark example} Seen   Theorem \ref{class relation}, with the rings $R$ and $S$ of Example  \ref{basic example}, we have
   $S\mathcal{GF}(R)=\mathcal{GF}(R)$ and $\mathcal{SGF}(S)\varsubsetneq \mathcal{GF}(S)$.
   \end{remark}
In what follows, we give the flat version of Theorem  \ref{direct
product of rings} as:
\begin{theorem}\label{direct product of rings2}
Let $\{R_i\}_{i=1,...,m}$ be a family of coherent rings with
finite Gorenstein weak  dimensions. Then,
$\mathcal{SGF}(\displaystyle
\prod_{i=1}^mR_i)=\mathcal{GF}(\displaystyle \prod_{i=1}^mR_i)$
if, and only if, $\mathcal{SGF}(R_i)=\mathcal{GF}(R_i)$ for each
$i=1,...,m$.
\end{theorem}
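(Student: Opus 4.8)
The plan is to follow the blueprint established by the projective version, namely Theorem \ref{direct product of rings}, and transport each step along the flat analogue. By induction on $m$ I would immediately reduce to the case $m=2$, so the entire argument concerns the ring $R_1\times R_2$ and its two factors.

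For the forward direction I would assume $\mathcal{SGF}(R_1\times R_2)=\mathcal{GF}(R_1\times R_2)$ and take a $G$-flat $R_1$-module $M$. The strategy is to lift $M$ to the $G$-flat $R_1\times R_2$-module $M\times 0$, invoke the hypothesis to get a strongly complete flat resolution $0\rightarrow M\times 0\rightarrow F\rightarrow M\times 0\rightarrow 0$ with $F$ a flat $R_1\times R_2$-module, and then push this back down to $R_1$ by tensoring with $-\otimes_{R_1\times R_2}R_1$. Here I would use the identification $M\times 0\otimes_{R_1\times R_2}R_1\cong M$ of $R_1$-modules exactly as in the proof of Theorem \ref{direct product of rings}, together with the fact that $R_1$ is a projective (hence flat) $R_1\times R_2$-module so that tensoring preserves exactness and preserves flatness of $F$. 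To verify that $M\times 0$ is genuinely $G$-flat over $R_1\times R_2$ I would use that $M$ is a direct summand of an $SG$-flat module (Proposition~1.3, using coherence), build the short exact sequence $0\rightarrow N\times 0\rightarrow F\times 0\rightarrow N\times 0\rightarrow 0$, and control the relevant $\Tor$ groups using the finiteness of $G.wdim(R_1\times R_2)$. To close the argument over $R_1$ I would apply the flat analogue of \cite[Proposition 2.9]{Bennis and Mahdou1}, namely \cite[Proposition 3.6]{Bennis and Mahdou1}, which says that having such a resolution together with the vanishing $\Tor_{R_1}(M,I)=0$ for all injective $I$ (automatic since $M$ is $G$-flat) yields that $M$ is $SG$-flat. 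The argument for $R_2$ is symmetric.

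For the converse I would assume $\mathcal{SGF}(R_i)=\mathcal{GF}(R_i)$ for $i=1,2$ and take a $G$-flat $R_1\times R_2$-module $M$, decomposing it as $M\cong M_1\times M_2$ with $M_i=M\otimes_{R_1\times R_2}R_i$. The flat analogue of \cite[Lemma 3.2]{Bennis and Mahdou3} should give that each $M_i$ is $G$-flat over $R_i$, hence $SG$-flat by hypothesis; then the flat analogue of the gluing \cite[Lemma 3.3]{Bennis and Mahdou3} assembles the two strongly complete flat resolutions into one over $R_1\times R_2$, showing $M=M_1\times M_2$ is $SG$-flat. Throughout, coherence of each $R_i$ (and hence of the product) is what licenses the use of Proposition~1.3 and the results of \cite{Holm} on $\mathcal{GF}$.

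The main obstacle I anticipate is not the change-of-ring bookkeeping, which mirrors the projective case, but rather ensuring that the flat versions of the auxiliary lemmas from \cite{Bennis and Mahdou3} actually hold in the coherent setting. In particular I must confirm that $M_i=M\otimes_{R_1\times R_2}R_i$ remains $G$-flat (the decomposition lemma) and that direct products of strongly complete flat resolutions remain strongly complete, i.e. that $-\otimes I$ still leaves the glued sequence exact for every injective $I$ over the product ring. This requires knowing that injective $R_1\times R_2$-modules decompose compatibly with the factor decomposition and that $G.wdim(R_1\times R_2)$ is finite (the weak-dimension analogue of \cite[Theorem 3.1]{Bennis and Mahdou3}); once these coherent-case analogues are in hand, the rest proceeds exactly as in Theorem \ref{direct product of rings}.
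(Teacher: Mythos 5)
Your proposal is correct and follows essentially the same route as the paper's proof: reduction to $m=2$, lifting $M$ to $M\times 0$, using the $SG$-flat direct-summand characterization and $\Tor$-vanishing from finite Gorenstein weak dimension, tensoring down by $-\otimes_{R_1\times R_2}R_1$, and for the converse the decomposition $M\cong M_1\times M_2$ followed by the gluing lemma. The flat analogues you flag as needing confirmation are precisely what the paper invokes: \cite[Proposition 3.10]{Holm} for the $G$-flatness of each $M_i$, \cite[Theorem 3.5 and Lemmas 3.6, 3.7]{Bennis and Mahdou3} for the finiteness of $G.wdim(R_1\times R_2)$, flatness of $F\times 0$, and the gluing step, with the hypothesis of the gluing lemma (injective $R_i$-modules have finite flat dimension) verified via \cite[Theorem 3.14]{Holm}.
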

\begin{proof}
By induction on m, we may assume $m = 2$.\\
First note that $R_1\times R_2$ is coherent since $R_1$ and $R_2$ are coherents.\\

 Assume that every Gorenstein flat $R_1\times R_2$-module is strongly Gorenstein flat and let $M$ be a $G$-flat $R_1$-module. We claim
that $M$ is an $SG$-flat $R_1$-module.  Clearly  $M\times 0$ is an
$R_1\times R_2$-module (see \cite[Page 101]{Berrick}). First, we
claim that $M\times 0$ is a $G$-flat $R_1\times R_2$-module. The
$R_1$-module $M$ is a direct summand of an $SG$-flat $R_1$-module
$N$ (\cite[Theorem 3.5.]{Bennis and Mahdou2}). For such module,
there is a short exact sequence of $R_1$-modules  $0\rightarrow
N\rightarrow F \rightarrow N \rightarrow 0$  where  $F$ is a flat
$R_1$-module. Hence, we have a  short sequence of $R_1\times
R_2$-modules $(\ast)\quad 0\rightarrow N\times 0\rightarrow
F\times 0 \rightarrow N\times 0 \rightarrow 0$ and   $F\times 0$
will be a  flat $R_1\times R_2$-module (\cite[Lemma 3.7]{Bennis
and Mahdou3}). But $G.wdim(R_1\times R_2)$ is finite
(\cite[Theorem 3.5]{Bennis and Mahdou3}). Then, there is an
integer $i>0$ such that $Tor_{R_1\times R_2}^i( N\times 0,I)=0$
for each injective $R_1\times R_2$-module $I$. From $(\ast)$ we
deduce that $Tor_{R_1\times R_2}( N\times 0,I)=0$. Then, by
\cite[Proposition 3.6]{Bennis and Mahdou1}, $N\times0$ is an
$SG$-flat $R_1\times R_2$-module. So, from \cite[Theorem
3.7]{Holm}, $M\times0$ is a $G$-flat $R_1\times R_2$-module (since
it is a direct summand of $N\times0$ as $R_1\times R_2$-modules
and since $R_1\times R_2$ is coherent). Now, we claim that $M$ is
an $SG$-flat $R_1$-module. The $R_1\times R_2$-module $M\times 0$
is $SG$-flat (by the hypothesis condition and since we have proved
that $R_1\times R_2$-module $M\times 0$ is $G$-flat). Then,  there
exists
    a short exact sequence of $R_1\times R_2$-modules $(\star)\quad 0
    \rightarrow M\times 0\rightarrow F \rightarrow
    M \times 0 \rightarrow 0$ where $F$ is a flat $R_1\times
    R_2$-module. Now, we tensor $(\star)$ by $-\otimes_{R_1\times R_2} R_1$ (see that $R_1$ is a projective $R_1\times R_2$-module), we obtain the short exact sequence of $R_1$-modules:
 $$(\star\star)\quad 0
    \rightarrow M\times 0 \displaystyle\otimes _{R_1\times R_2}R_1 \rightarrow F\displaystyle\otimes _{R_1\times R_2}R_1 \rightarrow
    M\times 0 \displaystyle\otimes _{R_1\times R_2}R_1 \rightarrow
    0$$ But  $M\times 0 \displaystyle\otimes _{R_1\times
    R_2}R_1\cong M\times0\displaystyle\otimes_{R_1\times
    R_2}(R_1\times R_2)/(0\times R_2) \cong M\times 0\displaystyle\cong_RM$ (isomorphism of $R$-modules). Then, we can
    write $(\star\star)$ as $ 0 \rightarrow M
    \rightarrow F\displaystyle\otimes _{R_1\times
    R_2}R_1 \rightarrow M \rightarrow 0$. It is clear that $F\displaystyle\otimes
    _{R_1\times
    R_2}R_1$ is a flat $R_1$-module. So, $M$ is an
    $SG$-flat $R_1$-module (since $G.wdim(R_1)$ is finite and by the same argument as above), as
    desired.\\
   Similarly, we can prove that every $G$-flat $R_2$-module is $SG$-flat.\\

 Conversely, assume that every $G$-flat $R_i$-module is $SG$-flat $R_i$-module
for, $i=1,2$. Let
    $M$ be a $G$-flat $R_1\times R_2$-module. We claim that $M$ is strongly Gorenstein flat. We have the isomorphism of $R_1\times R_2$-modules:
$$M\cong M\otimes_{R_1\times
    R_2}R_1\times R_2\cong M\otimes_{R_1\times
    R_2}(R_1\times0\oplus 0\times R_2)\cong M_1\times M_2$$ where
    $M_i=M\otimes_{R_1\times
    R_2} R_i$ for $i=1,2$ (see \cite[Page 102]{Berrick}). By \cite[Proposition 3.10]{Holm}, for each $i=1,2$,  $M_i$
    is a
    $G$-flat $R_i$-module. Then, $M_i$ is an $SG$-flat $R_i$-module  (by the hypothesis condition).\\
    Let $I$ be an injective $R_1$-module and set $n=G.wdim(R_1)$. Then, using \cite[Theorem 3.14]{Holm}, for
    every $R_1$-module $K$ we have $Tor_{R_1}^{n+1}(K,I)=0$ since $Gfd_R(K)\leq n$. Therefore, $fd_{R_1}(I)\leq
    n$. Similarly, we can prove that every injective $R_2$-module has a finite flat
    dimension. Thus, the family $\{R_i\}_{i=1,2}$ of rings satisfies the conditions of
    \cite[Lemma 3.6]{Bennis and Mahdou3}. Hence,   $M=M_1\times M_2$ is
   an  $SG$-flat  $R_1\times R_2$-module, as desired.
   \end{proof}

   Now we are able to construct a family of non-Noetherian coherent  rings $\{R_i\}_{i>0}$ over which every $G$-flat module is strongly
   Gorenstein flat and such that $i=G.wdim(R_i)<G.gldim(R_i)$ and $wdim(R_i)=\infty$.
   \begin{example}
   Consider a non-semisimple quasi-Frobenius ring $R$ and a semihereditary
ring $S$ which is not hereditary. Then, for every positive integer
$n$, we have:
\begin{enumerate}
    \item $\mathcal{SGF}(R\times S[X_1,X_2, ...,X_n])=\mathcal{GF}(R\times S[X_1,X_2, ...,X_n])$, and
    \item $n+1=G.wdim(R\times S[X_1,X_2, ...,X_n])<G.gldim(R\times S[X_1,X_2, ...,X_n])$ and $wdim(R\times S[X_1,X_2, ...,X_n])=\infty$.
\end{enumerate}
\end{example}
\begin{proof} Since $S$ is semihereditary, the ring  $T:= S[X_1,X_2, ...,X_n]$ is coherent
 and from \cite[Example 3.8]{Bennis and Mahdou3} only the
first assertion  need an argument. Since $wdim(T)<\infty$ (by the
Hilbert Syzygies's Theorem), every Gorenstein flat $T$-module is
flat (by \cite[Corollary 3.8]{Holm}).  Hence,
$\mathcal{F}(T)=\mathcal{SGF}(T)=\mathcal{GF}(T)$.
  On the other hand, from Remark \ref{remark example}, $\mathcal{SGF}(R)=\mathcal{GF}(R)$. Thus, since $G.wdim(R)=G.gldim(R)=0$ and $R$ is Noetherian (since $R$ is quasi-Frobenius), Theorem \ref{direct product of
 rings2} finish the proof.
\end{proof}



\end{document}